\begin{document}

\sloppy
\definecolor{cqcqcq}{rgb}{1,1,1}
\newtheorem{axiom}{Axiom}[section]
\newtheorem{claim}[axiom]{Claim}
\newtheorem{conjecture}[axiom]{Conjecture}
\newtheorem{corollary}[axiom]{Corollary}
\newtheorem{definition}[axiom]{Definition}
\newtheorem{example}[axiom]{Example}
\newtheorem{fact}[axiom]{Fact}
\newtheorem{lemma}[axiom]{Lemma}
\newtheorem{observation}[axiom]{Observation}
\newtheorem{proposition}[axiom]{Proposition}
\newtheorem{theorem}[axiom]{Theorem}

\renewcommand{\topfraction}{1.0}
\renewcommand{\bottomfraction}{1.0}

\newcommand{\proof}{\emph{Proof.}\ \ }
\newcommand{\qed}{~~$\Box$}
\newcommand{\rz}{{\mathbb{R}}}
\newcommand{\nz}{{\mathbb{N}}}
\newcommand{\zz}{{\mathbb{Z}}}
\newcommand{\eps}{\varepsilon}
\newcommand{\cei}[1]{\lceil #1\rceil}
\newcommand{\flo}[1]{\left\lfloor #1\right\rfloor}
\newcommand{\seq}[1]{\langle #1\rangle}
\newcommand{\p}{\star}
\newcommand{\mf}[1]{\small{\emph{#1}}}
\newcommand{\mfb}[1]{\small{\color{blue}\bf{\emph{#1}}}}


\title{The bipartite travelling salesman problem: A pyramidally solvable case}
\author{
Vladimir G.\ Deineko\thanks{{\tt Vladimir.Deineko@wbs.ac.uk}.
Warwick Business School, The University of Warwick, Coventry CV4 7AL, United Kingdom}
\and
Bettina Klinz\thanks{{\tt klinz@math.tugraz.at}.
Institute of Discrete Mathematics, TU Graz, Austria}
\and
Gerhard J.\ Woeginger\thanks{{\tt Deceased April 1, 2022}.
  Department of Computer Science, RWTH Aachen, Germany}
}
\date{}
\maketitle

\begin{abstract}
  In the bipartite travelling salesman problem (BTSP), we are given $n=2k$
  cities along with an $n\times n$ distance matrix and a partition of
  the cities into $k$ red and $k$ blue cities. The task is to find a
  shortest tour which
alternately visits blue and red cities. We consider the BTSP restricted to
the class of so-called Van der Veen distance matrices. We show that
this case remains NP-hard in general but becomes solvable in polynomial time
when  all vertices with odd indices are coloured blue and all
with even indices are coloured red. In the latter case an optimal solution
can be found in $O(n^2)$ time among the set of pyramidal tours.

\medskip\noindent{\bf Keywords.}        
Combinatorial optimisation, bipartite travelling salesman problem,  pyramidally solvable case, Van der Veen matrix, recognition algorithm.
 
\end{abstract}

\section{Introduction}
In the \emph{travelling salesman problem} (TSP) a salesperson is looking for
the shortest tour to visit all cities from a given list of cities. The input
consists of $n$ cities (including the city where the
salesperson lives), and the distances (or times) of travelling between each
pair of cities. The task is to find the shortest (cyclic) tour visiting all
the cities. The TSP is probably one of the best studied NP-hard
optimisation problems and has served as important benchmark problem in
discrete optimisation with a long list of outstanding contributions
to the theory and practice of the field (see e.g.\ the
monographs~\cite{ABCC,Gutin,TSP}). One of the well
established directions of research for NP-hard
optimisation problems is the  investigation of polynomially solvable special
cases (see the surveys~\cite{BSurv,DKTW,GLS,Kabadi} for further references). 

In the \emph{bipartite travelling salesman problem} (BTSP) the set of cities
$\{1,\ldots,n\}$ is partitioned into two subsets, the set
$K_1$ of blue cities and the set
$K_2$ of red cities where $|K_1|=|K_2|=k$, $n=2k$. Any feasible tour in the
BTSP has to alternate between blue and red cities. The objective 
is to find the shortest such tour.
\medskip

{\bf Motivation and previous work.}
An important application of the BTSP can be found in the context of container
terminal management
(see~\cite{BierwirthMeisel-a,BierwirthMeisel-b,Boysen,Carlo,LehnfeldKnust}). 
In a container terminal trains with containers arrive to a terminal and have
to be unloaded to a storage area. The containers have fixed positions on the
trains and the unloading is performed by a single crane. The goal is to minimise
the unloading time. The special case with only $k$ storage positions
specified for the locations of $k$ containers from the train, can be modelled
as BTSP.  The BTSP has also drawn the attention of researchers
(\cite{Baltz,BaltzSri,Chalasani,Frank}) due to its relevance
to pick-and-place (or grasp-and-delivery) robots
(\cite{Anily,Atallah,Lei,Michel,Karuno}).

The BTSP is not as well studied as the TSP. In particular, while there are
plenty of publications on polynomially solvable cases of the TSP, we are
aware of only a few papers~\cite{DW2014,Garcia,Halton,Mis} published on
solvable cases of the BTSP.

Halton~\cite{Halton} was the first who provided a polynomially solvable case
of the BTSP. He considered the shoelace problem where
cities represent the eyelets of shoes and the objective is to find
an optimal shoe lacing strategy that minimises the length of the 
shoelace. In Halton's model the eyelets can be viewed as points in
the Euclidean plane: the blue points $K_1=\{1,2,\ldots,k\}$ have coordinates
$(0,d),(0,2d),\ldots,(0,kd)$ and the red points $K_2=\{k+1,k+2,\ldots,n\}$
have coordinates $(a,d),(a,2d),\ldots,(a,kd)$, respectively.
Halton proved that the optimal BTSP solution in this case has a
special structure which is illustrated in Figure~\ref{fig:1H}(a).

\begin{figure}
\unitlength=1cm
\begin{center}
\begin{picture}(15.,5)
{
\begin{picture}(15.,5)
\put(1,0.7){\framebox[0.7\width]{
\includegraphics[scale=1]{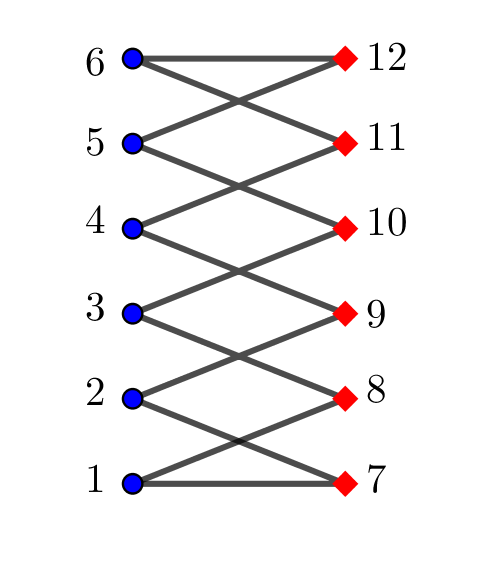}
}}
\put(4.6,0.7){\framebox[0.9\width]{
\includegraphics[scale=1.]{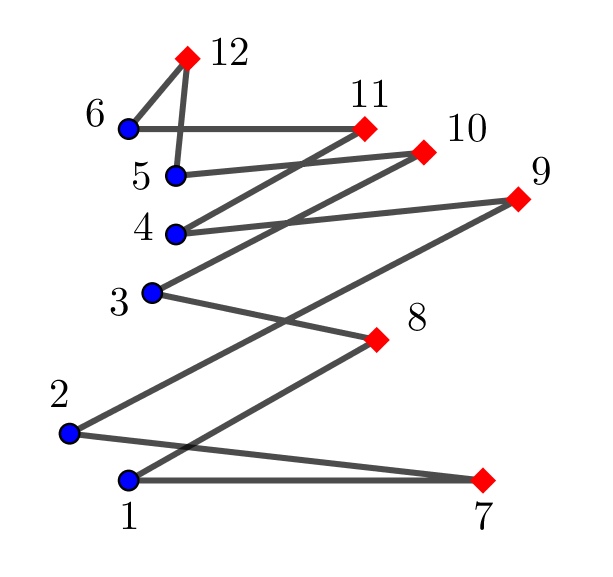}
}}
\put(10,0.7){\framebox[0.7\width]{
\includegraphics[scale=1.]{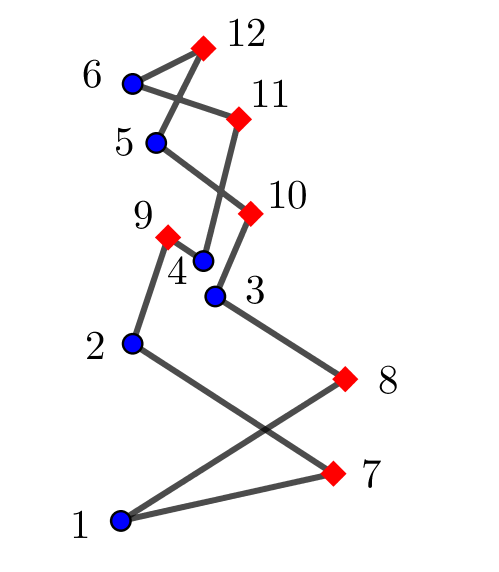}
}}
\put(2,0){(a)}
\put(7,0){(b)}
\put(11,0){(c)}
\end{picture} 
}
\end{picture}
\end{center}

\caption{Illustration from \cite{DW2014} for polynomially solvable BTSP cases as variants of the shoelace problem:
(a) - Halton \cite{Halton} case; (b) - Misiurewicz \cite{Mis} case; (c) case of  Deineko \& Woeginger \cite{DW2014}. 
}  
\label{fig:1H}
\end{figure}

The shoelace problem is a nice interpretation of the BTSP that can be used for
entertaining and educational purposes. 
Misiurewicz~\cite{Mis} argued that Halton's case is based on quite
restricted assumptions which are hardly met in real life. He generalised
Halton's model to the case referred to as
``for old shoes'' (Fig.~\ref{fig:1H}(b)). Deineko and Woeginger~\cite{DW2014}
went on further and investigated the case ``for
\emph{ very old} shoes'' (Fig.~\ref{fig:1H}(c)).  

Notice that the cities in Halton's case are points in the Euclidean plane
and are placed on the boundary of their convex hull. The blue and the
red points occur consecutively along this boundary.
Garcia and Tejel~\cite{Garcia} considered a more general case of the
Euclidean BTSP where the points are still on the boundary of their convex hull,
but the points with the same colour are  not necessarily on consecutive
positions. For this case, they described a candidate set of exponential
size within which the optimal BTSP tour can be found. The running time of
their algorithm is $O(n^6)$.

Recently Bandyapadhyay et al.~\cite{BPath} studied the bipartite TSP-path
problem with the points placed on a line. They described several cases
when the problem can be solved even in linear time.

The special BTSP cases considered in~\cite{BPath, Garcia, Halton} have been
characterised in terms of special locations of points in the Euclidean plane,
while in~\cite{DW2014} and~\cite{Mis} the considered special cases are
obtained by imposing conditions on the entries in the distance matrices.
These conditions are
defined by sets of linear inequalities. This approach is widely used in the
research literature on polynomially solvable cases of the TSP
(see e.g.~\cite{DKTW} and the references therein).
\medskip

{\bf Contribution and organisation of the paper.}
In this paper we investigate the BTSP on Van der Veen distance matrices (see
conditions (\ref{vdv.c}) in Section~\ref{sec:definitions} for a definition)
and the newly introduced class of relaxed Van der Veen matrices
which result if certain of the linear inequalities which are involved
in the definition of Van der Veen matrices are dropped
(for a definition see conditions~(\ref{eq:delta1}) in
Section~\ref{sec:pyramidal}).

The class of Van der Veen matrices 
has been well investigated in the literature on polynomially solvable
cases of the TSP, but
has not been considered in the context of the BTSP. We first show that
the BTSP when restricted to Van der Veen distance matrices remains NP-hard.
Then we show that the even-odd BTSP which results if $K_1$ contains all
cities with odd indices and $K_2$ contains all cities with even indices becomes
solvable in polynomial time  when restricted to (relaxed) Van der Veen
distance matrices. In this  case an optimal
tour can be found within the set of pyramidal tours in $O(n^2)$ time.

We can go one step further. We can recognise the class of matrices $C$
which become relaxed Van der Veen matrices after renumbering the 
cities in $K_1$ and in $K_2$ with independent permutations which allows us to
find an optimal tour for this subclass of permuted relaxed Van der Veen
matrices in $O(n^4)$ time (the time needed by the recognition algorithm).


In Section~\ref{sec:definitions} we provide the definitions and preliminaries
needed in the rest of the paper. Section~\ref{sec:pyramidal} constitutes the heart of
the paper and contains both the hardness result
for the BTSP restricted to general Van der Veen matrices as well as our
polynomial time algorithm for the even-odd BTSP restricted to relaxed
Van der Veen matrices. Section~\ref{sec:RecognitionP} describes a polynomial
time recognition algorithm for a subclass of permuted relaxed Van der Veen
matrices. Section~\ref{sec:conclusion} closes the paper with concluding
remarks.



\section{Definitions and preliminaries}\nopagebreak 
\label{sec:definitions}

Given an $n\times n$ distance matrix $C=(c_{ij})$ the objective in the TSP is to
find a cyclic
 permutation $\tau$ of the set $\{1,2,\ldots,n\}$ that minimises the
 travelled distance $c(\tau)=\sum_{i=1}^{n}c_{i\tau(i)}$.  
 Throughout this paper we assume that distance matrices are \emph{symmetric} matrices.

The cyclic permutations are also called \emph{tours}, the elements 
of  set $\{1,2,\ldots,n\}$ are  called \emph{cities} or \emph{points\/}, and
$c(\tau)$ is referred as the length of the permutation $\tau$.
 The set of all permutations over set $\{1,2,\ldots,n\}$ is denoted by
 ${\mathcal S}_n$. For $\tau\in {\mathcal S}_n $, we denote by $\tau^{-1}$ the
\emph{inversion} of $\tau$, i.e., the permutation for which
 $\tau^{-1}(i)$ is the predecessor of $i$ in the tour $\tau$, for
 $i=1,\ldots,n$. We also use a cyclic representation of a cyclic
 permutation $\tau$ in the form
\begin{eqnarray*}
\tau=\seq{i,\tau(i),\tau(\tau(i)),\ldots,\tau^{-1}(\tau^{-1}(i)),\tau^{-1}(i),i}.
\end{eqnarray*}

In the bipartite TSP (BTSP) on top of an $n\times n$ distance matrix $C$
we are also given a partition of the $n=2k$ cities into the two sets $K_1$ and
$K_2$ with  $K_1\cup K_2=\{1,2,\ldots,n\}$ and $|K_1|=|K_2|=k$.
The special case of the {\em even-odd BTSP}  results when 
$K_1$ contains all cities with odd indices and $K_2$ contains all cities
with even indices.

The set
${\mathcal T}_n(K_1,K_2)$ of all feasible tours for the BTSP 
can formally be defined as
\begin{eqnarray*}
{\mathcal T}_n(K_1,K_2)=\{\tau\in {\mathcal
S}_n|\tau^{-1}(i),\tau(i)\in K_2 \textrm{ for }
 i\in K_1;  
\tau^{-1}(i),\tau(i)\in K_1 \textrm { for } i\in K_2\}.
\end{eqnarray*}

We will refer to the tours in ${\mathcal T}_n(K_1,K_2)$ as
\emph{bipartite} tours or feasible BTSP tours.

For example, if $K_1:=\{1,2,\ldots,k\}$ and $K_2:=\{k+1,\ldots,n\}$, then the tour
\begin{eqnarray*}
 \tau^*=\seq{1,k+1,2,k+3,4,k+5,6\ldots,7,k+6,5,k+4,3,k+2,1}
\end{eqnarray*} 
which is illustrated in Figure~\ref{fig:1H} is a feasible BTSP tour, i.e., is
a member of ${\mathcal T}_n(K_1,K_2)$.

Let $C[K_1,K_2]$ denote the $k\times k$ matrix which is obtained
from  matrix $C$ by \emph{deleting} all rows with indices from $K_2$ and
all columns with indices from $K_1$. Clearly, the length $c(\tau)$ of
any feasible BTSP tour $\tau$ is calculated by using \emph{only}
entries from $C[K_1,K_2]$.

A tour $\tau=\seq{1,\tau_2,\ldots,\tau_m,n,\tau_{m+2},\ldots,\tau_{n-2},1}$ is
called a {\em pyramidal tour}, if $1<\tau_2<\ldots<\tau_m<n$ and 
$n>\tau_{m+2}>\ldots >\tau_{n-2}>1$.  An instance of the TSP/BTSP is called
\emph{pyramidally solvable} if an optimal solution to the instance can be
found within the set of pyramidal tours.
 
 The notion of pyramidal tours is well known in the rich literature on
 polynomially solvable cases of the TSP (see the
 surveys~\cite{BSurv,GLS,Gutin,Kabadi,DKTW}). Although the set of pyramidal
 tours contains $\Theta(2^n)$ tours, a shortest pyramidal can be found in
 $O(n^2)$ time by dynamic programming (see e.g.\ Section 7 in \cite{GLS}). 
 
 A symmetric $n\times n$ matrix $C$ is
 called a \emph{Van der Veen\/} matrix if it fulfils the so-called
 \emph{Van der Veen conditions}
\begin{eqnarray}
c_{ij}+c_{j+1,m}\le c_{im}+c_{j+1,j} &&
\mbox{~for all~} 1\le i<j<j+1<m\le n. \label{vdv.c}
\end{eqnarray}

\section{Complexity results for the BTSP on (relaxed) Van der Veen matrices}\label{sec:pyramidal}

In this section we investigate the complexity of the BTSP restricted to
Van der Veen matrices and relaxed Van der Veen matrices. It was proved by Van der Veen \cite{Veen} back in 1994 that the TSP with a
distance matrix that satisfies conditions (\ref{vdv.c}) is pyramidally
solvable. For the BTSP the situation is more complex.
It turned out that the partitioning into blue and red cities
which is part of the input plays a crucial role.

We will show that the BTSP restricted to Van der Veen matrices remains NP-hard
while the even-odd BTSP where the colouring of the cities is chosen according to
the parity of the city indices becomes polynomially solvable for Van der Veen
matrices and even for relaxed Van der Veen matrices.

We start with the hardness result for the special case where the first
$k$ vertices are coloured blue and the remaining ones are coloured red.

\begin{theorem}
  The BTSP is NP-hard on $n\times n$ Van der Veen distance matrices
  when the $n=2k$ cities are partitioned into the sets
  $K_1:=\{1,2,\ldots,k\}$ and $K_2:=\{k+1,\ldots,2k\}$.
\end{theorem}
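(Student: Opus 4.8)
The plan is to prove NP-hardness by a polynomial reduction from a known NP-hard problem whose structure naturally produces a distance matrix satisfying the Van der Veen conditions~(\ref{vdv.c}). The most promising source problem is the standard (symmetric) TSP itself, or more specifically a restricted version such as Hamiltonian cycle on an arbitrary graph. The key idea is that the bipartite alternation constraint, together with the freedom in placing ``blue'' cities $\{1,\ldots,k\}$ and ``red'' cities $\{k+1,\ldots,2k\}$, should let us encode an arbitrary $k\times k$ cost matrix (the matrix $C[K_1,K_2]$ that is the only part of $C$ affecting tour lengths) inside a larger $n\times n$ Van der Veen matrix. Since the length of any feasible BTSP tour depends \emph{only} on the submatrix $C[K_1,K_2]$, the main task is to show that an \emph{arbitrary} $k\times k$ matrix $B$ can be realised as $C[K_1,K_2]$ for some symmetric $n\times n$ matrix $C$ satisfying~(\ref{vdv.c}).

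First I would set up the reduction skeleton: given an instance of the hard source problem encoded as a $k\times k$ matrix $B=(b_{rs})$ with $r\in K_1$, $s\in K_2$, I define $c_{i,k+j}:=b_{ij}$ for the relevant cross entries and then try to fill in the remaining entries of $C$ (the blue-blue block, the red-red block, and the symmetric partners) with large or carefully chosen values so that all inequalities in~(\ref{vdv.c}) hold. The crucial observation that makes this plausible is that the Van der Veen conditions~(\ref{vdv.c}) only constrain quadruples $1\le i<j<j+1<m\le n$, and each inequality involves $c_{j+1,j}$ and $c_{im}$ on the large side against $c_{ij}+c_{j+1,m}$. By making the diagonal-adjacent entries $c_{j+1,j}$ and the long-range entries $c_{im}$ sufficiently large (for index pairs that do \emph{not} both lie across the colour cut in the wrong direction), one can dominate the left-hand side. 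The delicate point is that some of the cross entries $c_{i,k+j}=b_{ij}$ are forced to appear on the \emph{right-hand} (large) side of certain Van der Veen inequalities, so they cannot all be set freely.

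The hard part will be reconciling these two roles: the entries $b_{ij}$ must be arbitrary (to encode the NP-hard instance) yet must also satisfy the Van der Veen inequalities in which they participate. I would handle this by a \emph{shift} trick: replace each $b_{ij}$ by $b_{ij}+M\cdot f(i,j)$ for a suitable linear or affine function $f$ and a large constant $M$, chosen so that (i) the shift is a \emph{constant offset on every feasible bipartite tour} (so the optimal tour is unchanged and the reduction is faithful), and (ii) the shifted entries automatically satisfy~(\ref{vdv.c}). The standard device here is that adding terms of the form $\alpha_i+\beta_j$ to entry $c_{ij}$ changes every tour length by the fixed amount $\sum_i\alpha_i+\sum_j\beta_j$, hence preserves the set of optimal tours; and one can use such a potential (for instance $c_{ij}\to c_{ij}+Ki+Kj$ or a convex/Monge-type correction) to force the Van der Veen inequalities. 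Verifying that a single choice of potential makes \emph{all} the inequalities~(\ref{vdv.c}) hold simultaneously, while keeping the tour-invariance property, is the technical crux and would require a careful case analysis over the positions of $i,j,j+1,m$ relative to the colour boundary $k$.

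Finally I would confirm the reduction runs in polynomial time (the matrix $C$ has $O(n^2)$ entries, each computable directly) and that it is correct, i.e.\ the source instance is a yes-instance iff the constructed BTSP instance has a tour of length below the appropriate threshold. I expect the argument to reduce cleanly from TSP on a general symmetric matrix, since the even split $K_1=\{1,\ldots,k\}$, $K_2=\{k+1,\ldots,2k\}$ gives exactly the $k\times k$ freedom needed to embed an arbitrary $k$-city TSP instance, with the Van der Veen structure imposed only on the ``inert'' surrounding entries.
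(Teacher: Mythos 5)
Your overall plan---embed a hard instance into the cross block $C[K_1,K_2]$ and fill in the within-colour blocks so that (\ref{vdv.c}) holds---is the same strategy the paper uses, but your proposal breaks down at exactly the step you yourself identify as the crux, and the tool you propose for that step provably cannot work. Each Van der Veen inequality $c_{ij}+c_{j+1,m}\le c_{im}+c_{j+1,j}$ uses the same multiset of indices $\{i,j,j+1,m\}$ on both sides, so any transformation of the form $c_{ab}\mapsto c_{ab}+u_a+u_b$ (the standard device that changes every tour length by the same constant) adds $u_i+u_j+u_{j+1}+u_m$ to both sides and leaves every inequality in (\ref{vdv.c}) unchanged. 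Hence your ``shift trick'' can never reconcile arbitrary cross entries with the Van der Veen conditions: your requirement (i), tour-invariance, forces the correction to be of exactly the form that is useless for your requirement (ii). Relatedly, reducing from the general symmetric TSP does not typecheck: a feasible BTSP tour in $K_{k,k}$ is an alternating Hamiltonian cycle on $2k$ cities, not a tour on $k$ cities, so the $k\times k$ cross block does not ``embed an arbitrary $k$-city TSP instance.''

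The paper sidesteps both issues by reducing from \textsc{Hamiltonian Cycle} in \emph{bipartite} graphs and keeping the cross entries \emph{binary} (the $0/1$ complement of the adjacency matrix), then choosing the blue--blue entries as $c_{ij}=j-(k+1)$ and the red--red entries as $c_{k+i,k+j}=-i$. The binary range is essential to its verification: in the cases where one cross entry appears on each side of (\ref{vdv.c}), the inequality reduces to $c_{ij}-1\le c_{im}$ with $c_{ij},c_{im}\in\{0,1\}$, i.e.\ the unit slack built into the within-colour entries exactly absorbs the maximal possible difference of two binary cross entries. If you insist on arbitrary cross entries you would have to rescale the within-colour gaps by the range of the data and redo the entire case analysis over the position of $k$ relative to $i<j<j+1<m$---none of which is carried out in your proposal. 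As written, you give no explicit construction of the within-colour entries and no verification of (\ref{vdv.c}), so the decisive step of the argument is missing.
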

\proof The proof follows along the lines of the proofs of similar results 
for the TSP (see \cite{MaxDemi} and \cite{Steiner}) and makes some
adjustments required for the BTSP. The proof is done by a reduction from the
NP-hard HAMILTONIAN CYCLE PROBLEM IN BIPARTITE GRAPHS (cf. \cite{Garey}). 

Let $G=(K_1\cup K_2,E)$  be a bipartite graph with $E\subset K_1\times K_2$.
From $G$ we construct a $2k\times 2k$ Van der Veen matrix $C=(c_{ij})$ as
follows. The items in $C[K_1,K_2]$ are obtained from the adjacency matrix
of $G$: If there is an edge between $i\in K_1$ and $j\in K_2$, we set
$c_{ij}=c_{ji}=0$, otherwise $c_{ij}=c_{ji}=1$.

Notice that any tour for the BTSP involves only trips between cities which are
not on the same side of the partition. The corresponding distances are elements
of the submatrix $C[K_1,K_2]$. Hence
the graph $G$ contains a Hamiltonian cycle if and only 
if the length of an optimal solution to the BTSP instance with matrix $C$
is 0. 

What is left to be shown is that the yet undefined elements in $C$
can be set in a way such that the resulting matrix is a Van der Veen
matrix. This can be achieved as follows.
For $i,j\in K_1=\{1,2,\ldots,k\}$, $i<j$, we set
$c_{ij}=-(k+1)+j$, $c_{ji}=c_{ij}$. We also set $c_{k+i,k+j}=-i$,
$c_{k+j,k+i}=c_{k+i,k+j}$. This completes the construction of the matrix $C$.

We now need to check that the inequalities (\ref{vdv.c}) are fulfilled
to confirm that $C$ is
indeed a Van der Veen matrix. The following five cases need to be considered.
\begin{itemize}
\item[ (a)] $k\in [m,n]$: It follows from $m\le k$ that
  $i<k$, 
  $i<j<j+1<m\le n=2k$. Hence inequalities (\ref{vdv.c}) can be rewritten as
  $-i-(j+1)\le -i -j$, or $-1\le 0$ which is trivially fulfilled.
\item[(b)] $k\in [j+1,m-1]$: It follows from $j+1\le k\le m-1$ that $i\le k$,
  $k<j<j+1<m\le 2k$. Inequalities (\ref{vdv.c}) can be rewritten as $c_{ij}+
  k-j-1\le c_{im}+k-j$, or $c_{ij}-1\le c_{im}$, which is true since $c_{ij}$
  and $c_{im}$ are binary as they are obtained from the adjacency matrix of
  graph $G$.
\item[(c)] $k\in [j,j]$: It means that $i<j=k<k+1<m\le 2k$. In this case  $c_{k+1,k}$ and $c_{im}$ are binary, $c_{ik}= -(k+1)+k=1$, $c_{k+1,m}=-(k+1-k)=-1$. 
The inequality $1-1\le  c_{k+1,k}+c_{im}$ is always satisfied.
\item[(d)] $k\in [i,j-1]$: Similar to case (b).
\item[(e)] $k\in [1,i-1]$: Similar to case (a).
\hfill\qed
\end{itemize}
\medskip

The situation becomes more favourable for the even-odd BTSP.
Remember that there we consider $K_1:=\{1,3,\ldots,2k-1\}$ and
$K_2:=\{2,4,\ldots,2k\}$ with $n=2k$.

\begin{theorem}\label{theo:pyr}
  The even-odd BTSP on $n\times n$ Van der Veen distance matrices
  is pyramidally solvable and hence can be solved in $O(n^2)$ time.
  \end{theorem}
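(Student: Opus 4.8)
The plan is to reduce the theorem to a single structural statement: among all feasible (bipartite) tours there is one of minimum length that is pyramidal. Once this is in hand, the $O(n^2)$ bound is immediate, because the classical pyramidal dynamic program (Section~7 in~\cite{GLS}) can be adapted to maintain the colours of the two current chain endpoints and to extend a partial solution by the next city only when it respects the alternation of colours; this extra bookkeeping is $O(1)$ per transition and keeps the program within the set of bipartite pyramidal tours without changing its $O(n^2)$ complexity. Hence the real work lies entirely in the structural claim, which I would prove by a local exchange argument driven directly by the Van der Veen inequalities~(\ref{vdv.c}).

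The engine of the argument is a $2$-opt move of exactly the shape encoded in~(\ref{vdv.c}). Starting from a bipartite tour that contains two edges $\{i,m\}$ and $\{j,j+1\}$ occurring in the cyclic order $\langle\ldots,i,m,\ldots,j,j+1,\ldots\rangle$ with $i<j<j+1<m$, I delete these two edges and reconnect by $\{i,j\}$ and $\{j+1,m\}$; inequality~(\ref{vdv.c}) guarantees that the length does not increase. To control termination I would use the potential $\Phi(\tau)=\sum_{\{a,b\}\in\tau}|a-b|$, the total span of the tour edges: the move replaces two edges of total span $(m-i)+1$ by two edges of total span $(m-i)-1$, so $\Phi$ strictly decreases by $2$. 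Choosing, among all optimal bipartite tours, one that minimises $\Phi$, any applicable move of this form would produce another optimal bipartite tour of smaller $\Phi$, a contradiction; so the $\Phi$-minimal optimal tour admits no such move. The remaining combinatorial step is to verify that a tour admitting no move of this form is necessarily pyramidal, i.e.\ that every violation of the increase-then-decrease pattern exposes a quadruple $i<j<j+1<m$ of the required cyclic form.

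The crux — and the only place where the even-odd colouring is genuinely used — is that the $2$-opt move must preserve the alternating colouring. Reversing the segment between $m$ and $j$ keeps colours alternating exactly when that segment contains an odd number of cities, equivalently when the two new edges $\{i,j\}$ and $\{j+1,m\}$ again each join an odd index to an even one, i.e.\ when $m\equiv j\pmod 2$. Here the even-odd structure helps decisively: every edge of a bipartite tour joins an odd to an even index and therefore has \emph{odd} span, so $m-i$ is odd for the edge $\{i,m\}$ and the parity condition $m\equiv j\pmod 2$ is equivalent to $j-i$ being odd. I expect the main difficulty of the whole proof to be precisely this: showing that whenever the $\Phi$-minimal optimal tour fails to be pyramidal, one can locate a violating quadruple whose parities satisfy $m\equiv j\pmod 2$, so that the repairing move stays inside the feasible region. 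My plan for this step is to exploit the fact that consecutive cities along the tour have opposite parity, which rigidly constrains how the indices can interleave and should force any colour-feasible violation to come equipped with the correct parity pattern.

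Finally, I would record that the argument invokes~(\ref{vdv.c}) only for quadruples with $m\equiv j\pmod 2$; the inequalities with $m\not\equiv j\pmod 2$ are never used. This observation is what lets the same proof survive the weakening to relaxed Van der Veen matrices, in which exactly the unused inequalities are dropped, and it explains the parenthetical ``(relaxed)'' in the paper's main contribution. Assembling the pieces — the exchange argument for the structural claim together with the colour-aware pyramidal dynamic program — then yields an optimal bipartite tour in $O(n^2)$ time.
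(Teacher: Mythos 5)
There is a genuine gap, and it sits exactly where you yourself flag the ``crux''. Your exchange move requires the current tour to contain an edge $\{j,j+1\}$ between two \emph{consecutively numbered} cities, positioned after an edge $\{i,m\}$ with $i<j<j+1<m$ in the cyclic order; you then claim that a tour admitting no such move must be pyramidal. That claim is false: for $n=10$ the bipartite tour $\seq{1,4,7,10,3,6,9,2,5,8,1}$ alternates odd and even cities, is not pyramidal (city $3$ is a valley, with neighbours $10$ and $6$), and contains no edge joining consecutive integers at all, so no move of your form is applicable. Hence the $\Phi$-minimal optimal tour produced by your argument need not be pyramidal, and the reduction to the structural claim collapses. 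The paper runs the exchange in the \emph{opposite} direction precisely to avoid this: it locates the minimal valley $j+1>1$, deletes the two tour edges $(j,l)$ and $(j+1,m)$ incident to $j$ and $j+1$ (these always exist), reverses the segment between them, and \emph{inserts} the edge $(j,j+1)$; this move is always available in a non-pyramidal tour and strictly increases the minimal valley, so termination is immediate without any potential function.

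The second, related omission is that the cost bound for the correct move is not a single Van der Veen inequality. The paper must show $c_{j,j+1}+c_{lm}\le c_{jl}+c_{j+1,m}$ for all $j+1<l,m$ of the right parities, i.e.\ $\Delta(j+1,l,j,m)\le 0$, which coincides with an instance of (\ref{vdv.c}) only in degenerate cases; in general it is derived by telescoping $\Delta(j+1,l,j,m)$ into a sum of quantities each of which is a genuine Van der Veen inequality (Statements I and II in the paper). This derivation is the technical core of the proof and is entirely absent from your proposal, since your move only ever invokes (\ref{vdv.c}) verbatim. Relatedly, your closing remark misidentifies the relaxed class: the conditions (\ref{eq:delta1}) defining relaxed Van der Veen matrices are exactly these derived inequalities (which involve only entries of $C[K_1,K_2]$), not the parity-restricted subset of (\ref{vdv.c}) that your move would use. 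Your observations on parity preservation of $2$-opt moves and on adapting the pyramidal dynamic program to respect colours are correct, but they are the easy parts.
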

\proof
In this proof the well-known tour improvement technique (cf.\ \cite{BSurv}) is
used. Assume that we are given a bipartite feasible tour on $n=2k$ cities,
with the $k$ blue cities placed on odd positions, and the $k$ red cities
placed on even positions.
We will show how to transform a feasible tour $\tau$ into a pyramidal tour
which is also a \emph{feasible} BTSP solution, i.e., with
the blue cities placed on odd positions, and the red cities
placed on even positions. 

Index $i$ in tour $\tau$ is called a \emph{valley}, if $\tau^{-1}(i)>i$ and
$i<\tau(i)$. Observe that a tour is pyramidal if and only if city $1$
is its only valley.
If tour $\tau$ is not a pyramidal tour, we identify the minimal valley which
 is greater than $1$. Let this valley be $j+1$ and let $\tau(j+1)=m$.
 We further on assume that $\tau(j)=l$ and $l>j$ which can be done w.l.o.g.
 since the distance matrix $C$ is symmetric and we can choose to either
 work with $\tau$ or its inversion $\tau^{-1}$.
 Notice that the cities $j$ and $j+1$ have
 different parity. Assume that $j+1$ is even (a red city), then 
 $j$ is odd (a blue city).

\begin{figure}
\unitlength=1cm
\begin{center}
\begin{picture}(14.4,5)
\put(1.5,0)
{
\includegraphics[scale=1.8]{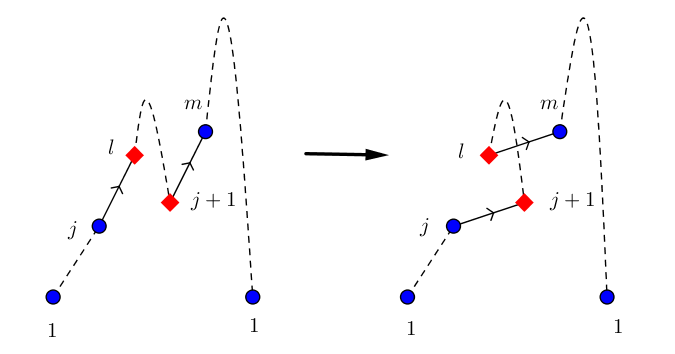}
}
\end{picture}
\end{center}
\caption{ Illustration of one iteration of the tour improvement technique.}
\label{fig:transformPyramidal}
\end{figure}

We now create a new feasible BTSP tour $\tau'$ as follows.
We delete the edges $(j,l)$, $(j+1,m)$, invert the sub-tour
$\seq{l,\ldots,j+1}$ to obtain the sub-tour $\seq{j+1,\ldots,l}$, and
then introduce two new edges $(j,j+1)$, $(l,m)$.  For an illustration see
Figure~\ref{fig:transformPyramidal}. It is obvious that we obtain
this way a new tour which is a feasible solution for the BTSP. Moreover,
the minimal valley in the new tour is bigger than $j+1$. After a finite
number of iterations we end up of a pyramidal tour which is a
feasible solution to the BTSP.

The rest of the proof will be concerned with proving the claim below. Iterative
application implies that the procedure described above transforms
any feasible BTSP tour into a pyramidal BTSP tour with the same
or a smaller total length.
\medskip

\noindent
\emph{ Claim.} $c(\tau')\le c(\tau)$.

\noindent
{\em Proof of the Claim.\/}
Observe that what we need to show is that
\begin{equation}\label{eq:delta}
  c_{j+1,j}+c_{lm} -c_{jl}-c_{j+1,m}\le 0 \qquad\mbox{for all}\ j,l,m:\ 
  j<j+1<l,m\le n
\end{equation}
where we can restrict ourselves to the case where
$(j\equiv m)\ \mathrm{mod}\ 2$ and $j+1\equiv l\ \mathrm{mod}\ 2$.
Let $\Delta(i,l,j,m):= c_{ij}+c_{lm}-c_{im}-c_{jl}$. For a symmetric matrix $C$,
$\Delta(i,l,j,m)=\Delta(j,m,i,l)$. 
Using the $\Delta$-notation, the inequalities (\ref{vdv.c}) can be rewritten as
$\Delta(i,j+1,j,m)\le 0$, or $\Delta(j,m,i,j+1)\le 0$ for all
$1\le i<j<j+1<m\le n$.
The inequalities (\ref{eq:delta}) can be rewritten as
$\Delta(j+1,l,j,m)\le 0$ for all $j,m,l$ with $j+1<l,m\le n$.
It hence suffices to prove the following two statements.
\begin{itemize}
  \item Statement I: $\Delta(j+1,l,j,m)\le 0 \mbox{~~for all~}\ j,m,l \ \mbox{~with~}\ j+1<
    \bm{l<m}\le n.$
  \item Statement II: $\Delta(j+1,l,j,m)\le 0 \mbox{~~for all~}\ j,m,l \ \mbox{~with~}\ j+1<
    \bm{m<l}\le n.$
  \end{itemize}
  
\begin{figure}
\unitlength=1cm
\begin{center}
\begin{picture}(14.5,6)
\put(1.5,-0.5)
{
\includegraphics[scale=1.2]{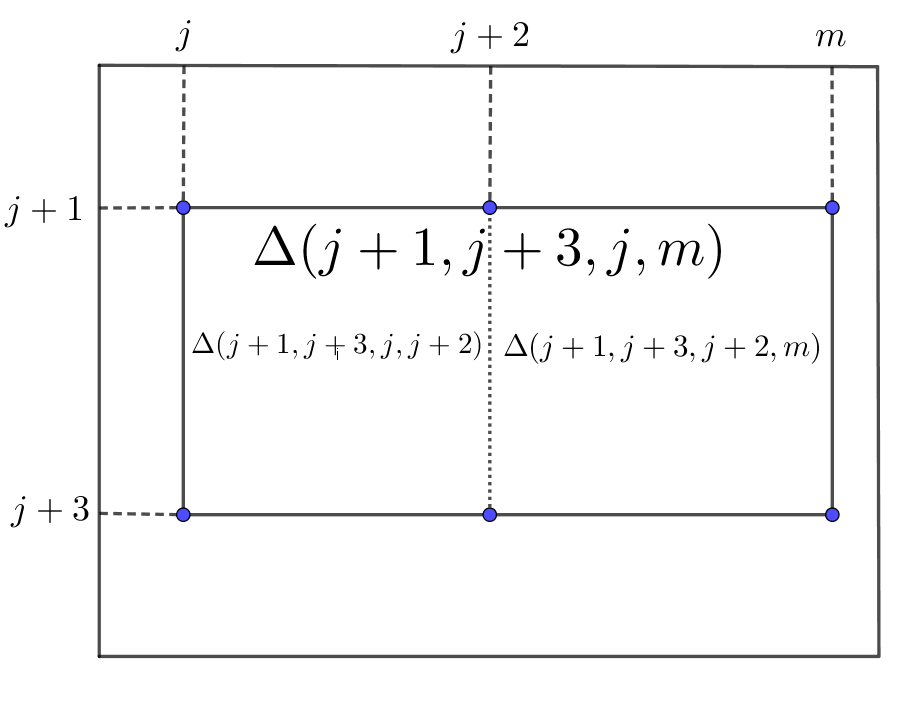}
}
\end{picture}
\end{center}
\caption{ Schematic representation of 
  $\Delta$: $\Delta(j+1,j+3,j,m)=\Delta(j+1,j+3,j,j+2)+\Delta(j+1,j+3,j+2,m)$}
\label{fig:schema}
\end{figure}

\noindent
{\em Proof of Statement I:\/} If $l=j+3$, then it can be easily checked that
$\Delta(j+1,j+3,j,m)=\Delta(j+1,j+3,j,j+2)+\Delta(j+1,j+3,j+2,m)$.
(For an illustration see Figure \ref{fig:schema}.)
Since $\Delta(j+1,j+3,j+2,m)=\Delta(j+2,m,j+1,j+3)$, it follows
from (\ref{vdv.c}) that both terms in the sum are non-positive. 

If $l>j+3$, then $l=j+3+2p$ with $p\ge 1$,
    since $l$ and $j$ are of different colours. It is easy to check that
    $\Delta(j+1,l,j,m)=\Delta(j+1,j+3,j,m)+\Delta(j+3,l,j,m)$, and
    $\Delta(j+1,j+3,j,m)\le 0$. If $l=j+3+2$, then
    $\Delta(j+3,l,j,m)\le 0$, as was shown above.
    Otherwise we represent $\Delta(j+3,l,j,m)$ as a sum of two terms,
    and so on. Eventually we end up with proving the inequality
    $\Delta(j+1,l,j,m)\le 0$.
\smallskip
    
\noindent
{\em Proof of Statement II:\/}
If $m=j+2$,  then it follows from (\ref{vdv.c}) that $\Delta(j+1,l,j,j+2)\le 0$. 
If $m>j+2$, then $l>j+3$, and it is easy to check that 
$\Delta(j+1,l,j,m)=\Delta(j+1,l,j,j+2)+\Delta(j+1,j+3,j+2,m)+\Delta(j+3,l,j+2,m)$. Again,
$\Delta(j+1,l,j,j+2)\le 0$ and $\Delta(j+1,j+3,j+2,m)\le 0$ due to
(\ref{vdv.c}). If $l-j-3>2$, we continue the process and eventually we end up
with proving the 
inequality $\Delta(j+1,l,j,m)\le 0$.
\hfill\qed
\medskip

Note that in the proof above only the following subset of the Van der Veen
inequalities is needed
\begin{eqnarray}
\label{eq:delta1}
\begin{blockarray}{cc}
c_{j+1,j}+c_{lm} -c_{jl}-c_{j+1,m}\le 0\ & 
 \mbox{for all}\ j,l,m:\ 1\le j<j+1<l,m;\\
&(j\equiv m)\ \mathrm{mod}\ 2 \mbox{ and } (j+1\equiv l)\ \mathrm{mod}\ 2.
\end{blockarray}
\end{eqnarray}
Note that all matrix entries involved in (\ref{eq:delta1}) are entries of
the submatrix $C[K_1,K_2]$.

We call a matrix $C$ that satisfies the conditions (\ref{eq:delta1}) a
\emph{relaxed} Van der Veen matrix.  The following corollary then follows
immediately.

\begin{corollary}
  The even-odd BTSP on an $n\times n$ relaxed Van der Veen matrix is
  pyramidally solvable.
\end{corollary}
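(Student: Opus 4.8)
The plan is to obtain the corollary directly from the argument already carried out for Theorem~\ref{theo:pyr}, simply by isolating which inequalities that argument actually consumes. First I would revisit the tour improvement step in isolation. Starting from an arbitrary feasible even-odd BTSP tour $\tau$ whose smallest valley exceeding $1$ is $j+1$ (with $\tau(j+1)=m$, $\tau(j)=l$, $l>j$), the transformation that deletes the edges $(j,l)$ and $(j+1,m)$, reverses the subtour $\seq{l,\ldots,j+1}$, and inserts the edges $(j,j+1)$ and $(l,m)$ produces a new tour $\tau'$ whose smallest valley is strictly larger. The key point I would stress is that this structural part makes no reference to the numerical entries of $C$: that $\tau'$ is again a \emph{feasible} bipartite tour follows purely from the parities of the indices (since $j,j+1$ have opposite parity and $j+1\equiv l$, $j\equiv m$ modulo $2$, both new edges join a blue city to a red one), and that the minimal valley increases is a combinatorial fact. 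Hence this part carries over verbatim to the relaxed setting.

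Next I would recall that the only place where the matrix values enter is the cost comparison $c(\tau')\le c(\tau)$, which the proof of Theorem~\ref{theo:pyr} reduced exactly to inequality~(\ref{eq:delta}), namely $c_{j+1,j}+c_{lm}-c_{jl}-c_{j+1,m}\le 0$ under the constraints $(j\equiv m)\ \mathrm{mod}\ 2$ and $(j+1\equiv l)\ \mathrm{mod}\ 2$. The crucial observation is that this inequality is \emph{literally} the defining condition~(\ref{eq:delta1}) of a relaxed Van der Veen matrix. Consequently, for such a matrix the Claim $c(\tau')\le c(\tau)$ holds by hypothesis, and the decomposition arguments of Statements~I and~II become unnecessary: those were invoked solely in order to \emph{derive}~(\ref{eq:delta}) from the full Van der Veen conditions~(\ref{vdv.c}), whereas here we assume~(\ref{eq:delta}) outright.

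Finally I would assemble the two pieces. Iterating the length-nonincreasing transformation a finite number of times — the minimal valley exceeding $1$ strictly increases at each step and is bounded by $n$ — converts any feasible even-odd BTSP tour into a pyramidal feasible BTSP tour of no greater length, so an optimum can always be found among the pyramidal tours, which is precisely pyramidal solvability. I do not expect a genuine obstacle here; the corollary is immediate. The one subtlety worth flagging is the bookkeeping already recorded after the theorem, namely that every entry occurring in~(\ref{eq:delta1}) lies in the submatrix $C[K_1,K_2]$ and that the parity constraints match the even-odd colouring, which is exactly what guarantees that the relaxed conditions coincide with the inequalities the transformation requires and nothing more.
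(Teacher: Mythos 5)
Your argument is correct and is exactly the paper's: the authors note after the proof of Theorem~\ref{theo:pyr} that the tour-improvement argument only ever invokes the inequalities~(\ref{eq:delta1}) (i.e., (\ref{eq:delta}) under the parity restrictions forced by the even-odd colouring), and the corollary is then stated as immediate. You have merely made explicit the two observations the paper leaves implicit — that the combinatorial part of the transformation is independent of the matrix entries, and that the decomposition in Statements~I and~II served only to derive~(\ref{eq:delta}) from~(\ref{vdv.c}) and is not needed when~(\ref{eq:delta1}) is assumed outright.
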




For the sake of illustration, Figure~\ref{fig:pyr1} shows two
set of points in the Euclidean plane
along with their corresponding optimal BTSP tours.
The distance matrix in instance \emph{A} is a
Van der Veen matrix while the distance matrix in instance \emph{B} is
a relaxed Van der Veen matrix. In instance \emph{B} we chose the first
four points (as in instance A) and randomly generated the other points
to agree with conditions (\ref{eq:delta1}). In instance \emph{B},
17 out of 165  Van der Veen inequalities (\ref{vdv.c})
are violated; e.g., take the inequality that results for $i=6$, $j=9$ and
$m=11$.

\begin{figure}
\unitlength=1cm
\begin{center}
\begin{picture}(15.,5)
{
\begin{picture}(15.,5)
\put(1.2,0.7){\framebox[1\width]{
\includegraphics[scale=1]{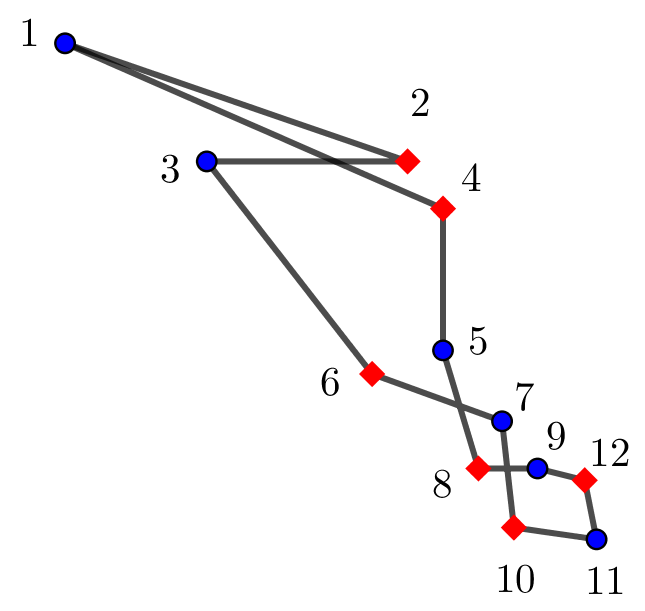}
}}
\put(8,0.7){\framebox[1\width]{
\includegraphics[scale=1]{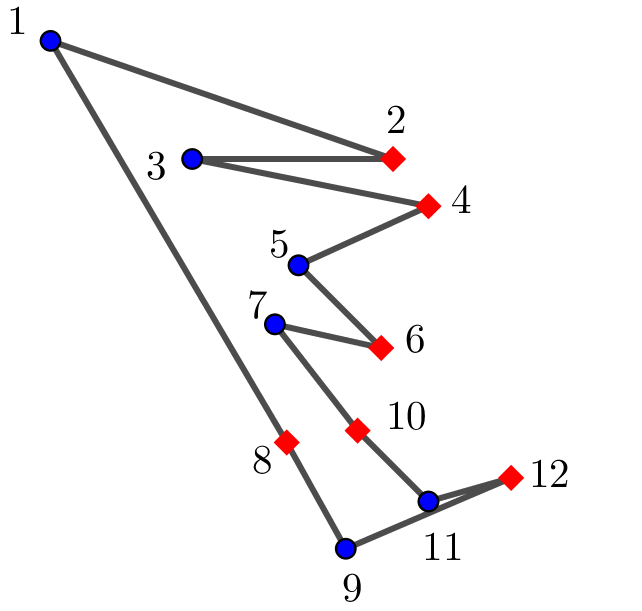}
}}
\put(3,0){Instance \emph{A}}
\put(10,0){Instance \emph{B}}
\end{picture} 
}
\end{picture}
\end{center}
\centerline
{
\begin{tabular}{|c|c|c|c|c|c|c|c|c|c|c|c|c|}
\hline 
Point number & 1 & 2 & 3 & 4 & 5 & 6 & 7 & 8 & 9 & 10 & 11 & 12 \\ 
\hline 
$x$-coord. in \emph{A} & 5 & 34 & 17 & 37 & 37 & 31 & 42 & 40 & 45 & 43 & 50& 49 \\ 
\hline 
$y$-coord. in \emph{A} & 45 &35 & 35 & 31 & 19 & 17 & 13 & 9 & 9 & 4 & 3 & 8 \\ 
\hline 
$x$-coord. in  \emph{B} & 5 & 35 & 17 & 37 & 26 & 33 & 24 & 25 & 30 & 31 & 37& 44 \\ 
\hline 
$y$-coord. in \emph{B}& 45 &35 & 35 & 31 & 26 & 19 & 21 & 11& 2 & 12 & 6 &8 \\ 
\hline 
\end{tabular}
} 
\caption{ Optimal  tours for instances of the even-odd BTSP with a Van der Veen matrix
  (Instance \emph{A}) and with a relaxed Van der Veen matrix (Instance \emph{B}).}   
\label{fig:pyr1}
\end{figure}

\section{Recognition of a subclass of permuted relaxed Van der Veen matrices}\label{sec:RecognitionP}

It is obvious that a matrix property which is defined by linear inequalities as
it is the case for Van der Veen matrices depends on the numbering of the rows
and of the columns. For the BTSP the partitioning of the set
of cities into the coloured sets also has to be taken into consideration.

We henceforth assume that the partitioning of the set of cities into the
subsets $K_1$ and $K_2$ is given, but we have the freedom of choosing the
numbering of the cities in each subset. More specifically, we consider symmetric
matrices that satisfy the system of linear inequalities (\ref{eq:delta1}) with
the partitioning $K_1:=\{1,3,\ldots,2k-1\}$ and $K_2:=\{2,4,\ldots,2k\}$.
We assume that the initial numbering of the cities in $K_1$ and $K_2$ was chosen
randomly, and the system (\ref{eq:delta1}) is not satisfied. We pose the
question whether it is possible to recognise \emph{the right} numbering of
the cities in $K_1$ and $K_2$.

To simplify the further notations, we define a new $k\times k$ asymmetric
matrix $A:=C[K_1,K_2]$ with $a_{ij}=c_{2i-1,2j}$. Using this notation,
the system (\ref{eq:delta1}) can be rewritten as
\begin{eqnarray}\label{eq:a1}
a_{11}+a_{lm}\le & a_{1m}+a_{l1} & l,m=2,3,\ldots,k,\\
a_{i,i-1}+a_{l,m}\le & a_{i,m}+a_{l,i-1} & l=i+1,\ldots,k; m=i,\ldots,k,\label{eq:a3}\\
a_{ii}+a_{lm}\le & a_{im}+a_{li}& l=i+1,\ldots,k; m=i+1,\ldots,k, \label{eq:a2}\\
&&i=2,\ldots,k-1.\nonumber
\end{eqnarray}

\begin{proposition}
Given a $k\times k$ matrix $A=(a_{ij})$, it can be decided in $O(k^4)$
time whether there exist permutations $\gamma$ and $\delta$ such that
the permuted matrix $(a_{\gamma(i)\delta(j)})$ satisfies conditions
\emph{(\ref{eq:a1})-(\ref{eq:a2})}. If the permutations
$\gamma$ and $\delta$ exist, they can be found in time $O(k^4)$.
\end{proposition}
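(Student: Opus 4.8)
The plan is to recast the three inequality families (\ref{eq:a1}), (\ref{eq:a3}) and (\ref{eq:a2}) as \emph{minimal-corner} conditions on nested submatrices and then to read off a greedy peeling algorithm. Call a pair $(r,c)$ a minimal corner of a submatrix $R\times C$ (rows $R$, columns $C$) if $a_{rc}+a_{r'c'}\le a_{rc'}+a_{r'c}$ for all $r'\in R$ and $c'\in C$. Let $R_i$ and $C_i$ denote the rows and columns that the sought permutations $\gamma,\delta$ place at positions $\ge i$, so that $R_1,C_1$ are the full index sets and $C_{i-1}=\{\delta(i-1)\}\cup C_i$. A direct rewriting shows that (\ref{eq:a1}) and (\ref{eq:a2}) together assert exactly that $(\gamma(i),\delta(i))$ is a minimal corner of $R_i\times C_i$ for every $i$, whereas (\ref{eq:a3}) asserts that $(\gamma(i),\delta(i-1))$ is a minimal corner of $R_i\times C_{i-1}$. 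The crucial consequence is a coupling between consecutive positions: once the column $\delta(i-1)$ has been fixed, condition (\ref{eq:a3}) forces the next row $\gamma(i)$ to be a minimal-corner row for that column among the still-unplaced rows $R_i$, so the column chosen at one step pins down the row chosen at the next.

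This structure suggests determining $\gamma$ and $\delta$ one position at a time. First I would find a global minimal corner $(\gamma(1),\delta(1))$ of $A$ by scanning all $k^2$ candidate pairs. Then, for $i=2,\dots,k$, with $\delta(i-1)$ already placed and the unplaced rows and columns forming $R_i$ and $C_i$, I would use (\ref{eq:a3}) to select $\gamma(i)$ as a row making $(\gamma(i),\delta(i-1))$ a minimal corner of $R_i\times(\{\delta(i-1)\}\cup C_i)$, and then use (\ref{eq:a2}) to select $\delta(i)$ as a column making $(\gamma(i),\delta(i))$ a minimal corner of $R_i\times C_i$; if either selection is impossible the algorithm reports that no admissible $\gamma,\delta$ exist. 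A single step inspects $O(k)$ candidate rows and $O(k)$ candidate columns and checks the minimal-corner property of each in $O(k^2)$ time, so the $k$ steps cost $O(k^4)$ in total, which also absorbs the one-off initial scan and yields the claimed bound together with the explicit permutations.

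The engine of the correctness proof is an \emph{interchange lemma}. If two rows $r_1,r_2$ are simultaneously minimal-corner rows for the same column of a block $R\times C$, then adding the two defining inequalities forces both to hold with equality, so $a_{r_1c}-a_{r_2c}$ is independent of $c$ on that block; that is, tied rows differ by an additive constant and may be swapped without affecting any minimal-corner condition, and symmetrically for columns. Using this I would argue by induction on $k$ that the greedy choices are \emph{safe}: assuming that some admissible pair $(\gamma^\ast,\delta^\ast)$ exists, the minimal corner required at each step must then exist (so the procedure never fails prematurely), and any minimal corner the algorithm selects can be exchanged to the current position of $(\gamma^\ast,\delta^\ast)$ without destroying admissibility, after which the induction hypothesis applies to the deleted submatrix. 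The additive-constant description of ties is precisely what makes these exchanges legitimate and lets the algorithm commit to one choice without backtracking.

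The hard part will be this exchange step, and inside it the \emph{linking} condition (\ref{eq:a3}). Conditions (\ref{eq:a1}) and (\ref{eq:a2}) refer only to the trailing block $R_i\times C_i$ and so behave well under deletion of a row and a column; by contrast (\ref{eq:a3}) ties the newly placed row back to the previously removed column $\delta(i-1)$, so a deletion does not reduce the instance to one of exactly the same shape, and the retained left-most column must be carried along in the recursion. I therefore expect the delicate point to be showing that moving a freely chosen global minimal corner to position $1$ — and, at later steps, moving the selected row into the current position — preserves every surviving instance of (\ref{eq:a3}) as well as of (\ref{eq:a2}). Equivalently, one must show that the first corner may be \emph{any} global minimal corner rather than a specifically distinguished one; this is exactly what keeps the algorithm free of backtracking and within the $O(k^4)$ budget, and verifying the compatibility of the minimal-corner and interchange properties with this one-sided carry-over of the column is the crux of the argument.
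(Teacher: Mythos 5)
Your reformulation of (\ref{eq:a1})--(\ref{eq:a2}) as nested minimal-corner conditions, the alternating row/column peeling driven by (\ref{eq:a3}) and (\ref{eq:a2}), and the additive-constant interchange lemma for resolving ties all match the paper's argument (which in turn follows \cite{DW2014} and \cite{BurDei}). The one place where you genuinely diverge is the treatment of position $1$, and that is exactly where the gap lies. Your interchange lemma only applies to two minimal corners that share a row or share a column: there the two defining inequalities can be added to force equality and hence an additive shift. Two \emph{global} minimal corners $(r_1,c_1)$ and $(r_2,c_2)$ with $r_1\ne r_2$ and $c_1\ne c_2$ yield, upon the analogous substitution, the \emph{same} inequality twice, so no equality and no additive relation between the corresponding rows or columns follows; such corners need not be exchangeable, and nothing in your argument shows that an arbitrarily chosen global minimal corner extends to a full admissible ordering whenever some admissible pair $(\gamma^\ast,\delta^\ast)$ exists. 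You flag this yourself as the crux, but you leave it unproven; without it your algorithm can commit to a wrong first corner and incorrectly report infeasibility.

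The paper does not prove this claim either --- it sidesteps it. It enumerates all $k$ candidates for $\gamma(1)$ and runs the $O(k^3)$ greedy completion for each; the interchange lemma is invoked only after the first \emph{row} is fixed (the candidate first columns satisfying (\ref{recognition}) then share that row, and at every later step (\ref{eq:a3}) ties the new row to the already-placed column $\delta(i-1)$ while (\ref{eq:a2}) ties the new column to the already-placed row $\gamma(i)$, so all subsequent ties do share a line and are additive shifts of one another). This outer loop over the first row is precisely the source of the $O(k^4)$ bound. To repair your proof, either supply the missing exchange argument for the first corner --- which I would not expect to go through without additional structure --- or replace your single global scan by the paper's enumeration of the $k$ possible first rows; the latter costs nothing against the claimed $O(k^4)$ budget and removes the only unproven step.
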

\begin{proof} 
  The System (\ref{eq:a1})-(\ref{eq:a2}) is similar to the systems
  investigated in~\cite{BurDei} and~\cite{DW2014}. The proof below
  follows the logic of the approached used by~\cite{DW2014}. 

First try all $k$ indices as candidates for the first position
in $\gamma$. Let $\gamma(1)=1$. According to (\ref{eq:a1}), index $i$
can be placed at the first
position in $\delta$ if and only if 
\begin{eqnarray}
a_{1i}+a_{st}\le a_{si}+a_{1t} \mbox{ for all\ } s\neq 1, t\neq
i. \label{recognition}
\end{eqnarray}
If there is another candidate $j$ with the same property ($i\neq j$), then it
can be shown
that $ a_{1i}+a_{sj}=a_{si}+a_{1j}$, i.e., $ a_{sj}=a_{si}+d $
 for all $s$, where
$d=a_{1i}-a_{1j}$ is a constant for fixed $i$ and $j$. Since adding a
constant to a row or a column of matrix $A$ does not affect
the inequalities (\ref{eq:a1})-(\ref{eq:a2}), any of the indices $i$ or
$j$ can be placed at the first position in $\delta$.

The candidate $i$ can be chosen in $O(k^2)$
time. Note that the transformation $a'_{st}=a_{st}-a_{1t}$,
$s=1,\ldots,k$, $t=1,\ldots,k$, transforms
matrix $A$ into matrix $A'$ with zeros in the first row. 
The inequalities (\ref{recognition}) are equivalent to
$ a'_{st}\le a'_{si} \mbox{ for all\ } s,t$ and $i$.  
Clearly, index $i$ can be found in $O(k^2)$ time by looking through the
indices of the maximal entries in the rows of $A'$. 

An index for the second position in $\delta$ needs to be chosen by using the
same approach based on inequalities (\ref{eq:a3}). An index for the
second position in $\gamma$ needs to be chosen by using the inequalities
(\ref{eq:a2}). This approach is going to be repeated for all positions.

This results in $O(k^3)$ time needed for 
for each candidate at the position
$\gamma(1)$ and, therefore, overall a running time of
$O(k^4)$ as claimed.  \hfill\qed
\end{proof}
\smallskip

To illustrate the algorithm we consider the BTSP with a rectilinear distance matrix
(see Fig.~\ref{fig:pyrM1}) where the distances between points are calculated as $c_{ij}=|x_i-x_j|+|y_i-y_j|$. 
\begin{figure}
\unitlength=1cm
\begin{center}
\begin{picture}(15.,6)
{
\begin{picture}(15.,6)
\put(0.5,0.7){\framebox[1\width]{
\includegraphics[scale=0.99]{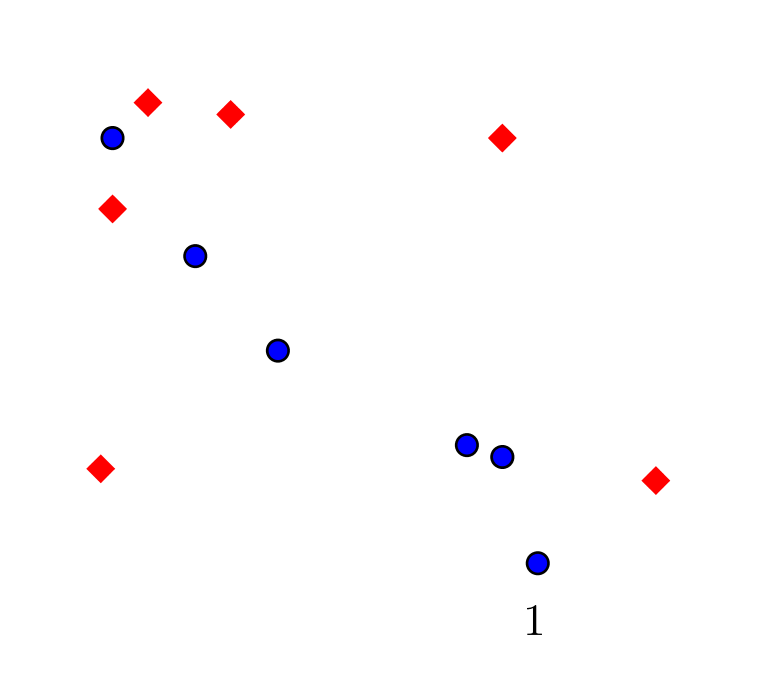}
}}
\put(7.5,0.7){\framebox[1\width]{
\includegraphics[scale=1]{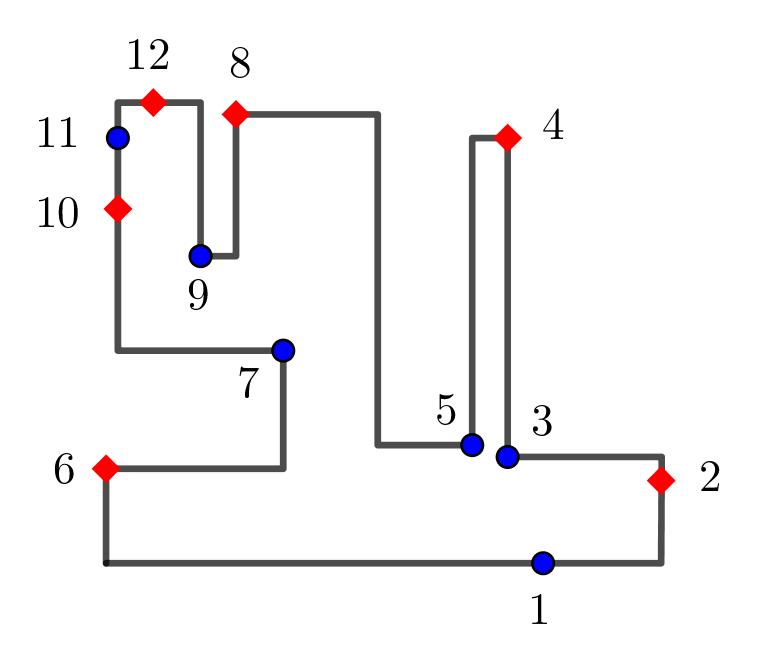}
}}

\put(3.5,0){(a)}
\put(10.5,0){(b)}
\end{picture} 
}
\end{picture}
\end{center}
\centerline
{
\begin{tabular}{|c|c|c|c|c|c|c|c|c|c|c|c|c|}
\hline 
Point number & 1 & 2 & 3 & 4 & 5 & 6 & 7 & 8 & 9 & 10 & 11 & 12 \\ 
\hline 
$x$-coord.  & 38 & 48 & 35 & 35 & 32 & 1 & 16 & 12 & 9 & 2 & 2& 5 \\ 
\hline 
$y$-coord.  & 8 &15 & 17 & 44 & 18 & 16 & 26 & 46 & 34 & 38 & 44 & 47 \\ 
\hline 
\end{tabular}
} 
\caption{ (a) - Set of points which satisfies (\ref{eq:a1})-(\ref{eq:a2}), if the numbering is chosen as shown in (b); (b)- Optimal pyramidal BTSP tour $\seq{1,2,3,4,5,8,9,12,11,10,7,6}$.}   
\label{fig:pyrM1}
\end{figure}

\[
A=
\begin{blockarray}{cccccc}
\begin{block}{(cccccc)}
  17 & 39 & 45 & 64 & 66 & 72\\
   15 & 27 & 35 & 52 & 54 & 60\\
 19 & 29 & 33 & 48 & 50 & 56\\
   43 & 37 & 25 & 24 & 26 & 32\\
 58 & 36 & 26 & 15 & 11 & 17\\
  75 & 33 & 29 & 12 & 6 & 6\\
\end{block}
\end{blockarray}\ \ \quad
A'=
\begin{blockarray}{cccccc}
\begin{block}{(cccccc)}
  0 & 0 & 0 & 0 & 0 & 0\\
  -2 & -12 & -10 & -12 & -12 & -12\\
   2 & -10 & -12 & -16 & -16 & -16\\
  26 & -2 & -20 & -40 & -40 & -40\\
   41 & -3 & -19 & -49 & -55 & -55\\
   58 & -6 & -16 & -52 & -60 & -66\\
\end{block}
\end{blockarray}
\]

The index of the maximal entries in $A'$ in rows $2,\ldots,6$ is $1$,
so $\delta(1)=1$, i.e., column 1 remains in $A$ at the same position.

Row 1 in $A$ is not relevant any more to further constructions, therefore we consider a $5\times 6$ submatrix of $A$ to choose a row to be placed at the second position in permutation $\gamma$. 
This submatrix and its transformation $A'$ are shown below.
\[
A_{5\times 6}=
\begin{blockarray}{cccccc}
\begin{block}{(cccccc)}
   15 & 27 & 35 & 52 & 54 & 60\\
 19 & 29 & 33 & 48 & 50 & 56\\
   43 & 37 & 25 & 24 & 26 & 32\\
 58 & 36 & 26 & 15 & 11 & 17\\
  75 & 33 & 29 & 12 & 6 & 6\\
\end{block}
\end{blockarray}\ \ \quad
A'_{5\times 6}=
\begin{blockarray}{cccccc}
\begin{block}{(cccccc)}
 0 & 12 & 20 & 37 &39 & 45\\
 0 & 10 & 14 & 29 & 31 & 37\\
 0 & -6 & -18 & -19 & -17 & -11\\
 0& -22& -32 & -43 & -47 & -41 \\
 0& -42&-46&-63&-69&-69\\
\end{block}
\end{blockarray}
\]

The index of the maximal entries in $A'$ in columns $2,\ldots,6$ corresponds
to row 2 in $A$ (row 1 in the $5\times 6$ submatrix), so $\gamma(2)=2$.

Proceeding in the same way we eventually obtain $\gamma=\delta=id_6$ where
$id_6$ denotes the identity permutation on $\{1,\ldots,6\}$ which
means that the initial numbering of the cities
as shown in Figure \ref{fig:pyrM1} yields a distance matrix that already
satisfies the conditions (\ref{eq:a1})-(\ref{eq:a2}), or equivalently,
conditions (\ref{eq:delta1}).

The optimal BTSP tour can be found by finding a shortest pyramidal tour,
which is the tour $\seq{1,2,3,4,5,8,9,12,11,10,7,6}$.

\section{Conclusion}\label{sec:conclusion}

In this paper we provided a new polynomially solvable case of the BTSP.
In previously published papers, an optimal solution can either be implicitly
specified based only on the knowledge that the distance matrix has a special
structure (\cite{DW2014, Halton,Mis}), or can be found in an exponential
neighbourhood in $O(n^6)$ time (\cite{Garcia}). In the new case discussed in
this paper an optimal solution belongs to the set of pyramidal tours,
and hence can be found in $O(n^2)$ time. If the rows and columns in
the distance matrix are permuted, the special structure of
the underlying distance matrix can be recognised in $O(n^4)$ time.

\paragraph{Acknowledgements.} This research has been supported by the
Austrian Science Fund (FWF): W1230. The authors thank Natalia Chakhlevitch  and Sigrid Knust for helpful discussions and comments on earlier version of this paper, in particular for a helpful reference to the crane loading problem.

\end{document}